\newcommand\polymake{\texttt{polymake}\xspace}
\newcommand\topcom{\texttt{TOPCOM}\xspace}
\newcommand\mptopcom{\texttt{MPTOPCOM}\xspace}
\newcommand\mts{\texttt{mts}\xspace}
\newcommand\MPI{\texttt{MPI}\xspace}
\newcommand{\gkz}{{\mathop{gkz}}}
\newcommand{\conv}{\mathop{conv}}
\newcommand{\HST}{\mathop{HST}}
\newcommand{\cyclic}{\mathcal C}
\newcommand{\RR}{\mathbb R}
\begin{document}

\mainmatter

\title{New counts for the number of triangulations\\ of cyclic polytopes} 
\author{Michael Joswig\inst{1} \and  Lars Kastner\inst{1}}
\authorrunning{Joswig and Kastner}
\institute{
Institut f{\"u}r Mathematik,\\
 TU Berlin,\\
 Str.\ des 17. Juni 136, 10623 Berlin, Germany\\
 \email{joswig@math.tu-berlin.de},\\
 \url{http://page.math.tu-berlin.de/~joswig/}\\
 \email{kastner@math.tu-berlin.de},\\
 \url{http://page.math.tu-berlin.de/~kastner/}
}
\maketitle

\begin{abstract}
  We report on enumerating the triangulations of cyclic polytopes with the new software \mptopcom.
  This is relevant for its connection with higher Stasheff--Tamari orders, which occur in category theory and algebraic combinatorics.
\end{abstract}



\section{Introduction}

For an integer $d\geq 1$ the $d$-th \emph{moment curve} is the map
\[
  \mu_d:\RR\to\RR^d \,,\ t \mapsto (t,\ldots,t^{d}) \enspace .
\]
Picking $n$ real numbers $t_1<t_2<\dots<t_n$, where $n>d$, the convex hull
\begin{equation}\label{eq:cyclic}
  \cyclic(n,d) \ = \ \conv\{\mu_d(t_1),\mu_d(t_2),\dots,\mu_d(t_n)\}
\end{equation}
is the $d$-dimensional \emph{cyclic polytope} with $n$ vertices.
The combinatorics of $\cyclic(n,d)$ is given by \emph{Gale's evenness criterion}; cf. \cite[Theorem~0.7]{Ziegler:1995}.
In particular, the combinatorial type does not depend on the values $t_1,t_2,\dots,t_n$ but just on their number.
The cyclic polytopes are neighborly, and hence their $f$-vectors attain McMullen's upper bound \cite[Theorem~8.23]{Ziegler:1995}.
The \emph{higher Stasheff--Tamari orders} are certain poset structures on the set of all triangulations of $\cyclic(n,d)$.
Their study was initiated by Kapranov and Voevodsky \cite{KapranovVoevodsky:Stasheff-Tamari} in the context of category theory; see also \cite{EdelmanReiner:Stasheff-Tamari}, \cite{Rambau:1997} and \cite{RambauReiner:Stasheff-Tamari}.
Here we address the problem raised in \cite[\S5.2]{KapranovVoevodsky:Stasheff-Tamari}, which asks for determining the number of triangulations of $\cyclic(n,d)$.
We report on new computational results, obtained via the new software \mptopcom \cite{mptopcom}.
This verifies and extends previous results of Rambau and Reiner \cite[Table~1]{RambauReiner:Stasheff-Tamari}, which were obtained with \topcom \cite{topcom}.
The general question remains wide open.
Notice that the planar case $d=2$ gives the Catalan numbers.

Triangulations of polytopes and of finite point configurations are the subject of the monograph \cite{Triangulations} by De Loera, Rambau and Santos.
Cyclic polytopes are discussed in \cite[\S6.1]{Triangulations}.
We are indebted to J\"org Rambau and Francisco Santos for suggesting to apply \mptopcom to cyclic polytopes and for many useful comments on an earlier version of this text.

This research is carried out in the framework of Matheon supported by Einstein Foundation Berlin. 
Further partial support by DFG (SFB-TRR 109 and SFB-TRR 195) is gratefully acknowledged.

\section{The first higher Stasheff--Tamari order}

Let $P\subset\RR^d$ be a finite point configuration.
A \emph{circuit} of $P$ is a minimally affinely dependent subconfiguration.
A \emph{triangulation} of $P$ is a subdivision of the convex hull $\conv(P)$ whose vertices form a subset of the points in $P$.
Two triangulations of $P$ \emph{differ by a flip} if they agree outside a circuit.
Here we are interested in the triangulations of the point configuration given by the vertices of $\cyclic(n,d)$ and their flips.

There is a canonical projection of the $(d{+}1)$-dimensional simplex $\cyclic(d+2, d+1)$ onto $\cyclic(d+2,d)$ by forgetting the last coordinate.
There are precisely two triangulations of $\cyclic(d+2,d)$, and these correspond to projecting the lower and the upper hull of $\cyclic(d+2,d+1)$.
Consequently, we call them the \emph{lower} and the \emph{upper triangulation} of $\cyclic(d+2,d)$, respectively.
From the construction (\ref{eq:cyclic}) it is immediate that each circuit of $\cyclic(n,d)$ looks like $\cyclic(d+2,d)$.
Combined with the observation on the two triangulations of $\cyclic(d+2,d)$, this has far reaching consequences for the structure of the triangulations of $\cyclic(n,d)$ for arbitrary $n>d$.

Let $\Delta$ and $\Delta'$ be two triangulations of $\cyclic(n,d)$ which differ by a flip.
Then there is subset $C$ of the vertices of cardinality $d+2$ such that $\Delta$ and $\Delta'$ restricted to $C$ look like the upper and the lower triangulations of $\cyclic(d+2,d)$.
If $\Delta$ is the lower and $\Delta'$ is the upper triangulation, then we call the flip $[\Delta\rightsquigarrow\Delta']$ from $\Delta$ to $\Delta'$ an \emph{up-flip}.
Conversely, the reverse flip $[\Delta'\rightsquigarrow\Delta]$ is a \emph{down-flip}.
In this case we write
\begin{equation}\label{eq:HST1}
  \Delta \ \leq_1 \ \Delta' \enspace .
\end{equation}
The partial ordering on the set of all triangulations of $\cyclic(n,d)$ which is obtained as the transitive and reflexive closure of the relation (\ref{eq:HST1}) is the \emph{first higher Stasheff-Tamari order}, denoted as $\HST_1(n,d)$; cf.\ \cite[Definition 6.1.18]{Triangulations} and \cite{Rambau:1997}.
Figure~\ref{fig:HST1-62} below shows $\HST_1(6,2)$ as an example.

On the same set of triangulations of $\cyclic(n,d)$ there is a second natural partial ordering, the \emph{second higher Stasheff-Tamari order}, $\HST_2(n,d)$; cf.\ \cite[Definition 6.1.16]{Triangulations}.
It is known that $\HST_1(n,d)$ is a weaker partial order than $\HST_2(n,d)$.
Moreover, these two orders coincide for $d\leq 3$ and $n-d\in\{1,2,3\}$; cf.\ \cite{RambauReiner:Stasheff-Tamari}.
In general, it is open whether or not they agree.

\begin{figure}
\strut\hfill
\begin{minipage}{.4\textwidth}\centering
\begin{tikzpicture}[baseline=2cm]
\draw[step=1, black!20, thin] (-1.5,-.5) grid (2.5, 4.5);
\draw[thick] (-1,1) -- (0,0) -- (1,1) -- (2,4) -- cycle;
\draw[thick] (-1,1) -- (1,1);
\foreach \i in {-1,0,1,2}{
  \fill[black,radius=2pt] (\i,\i*\i) circle;
}
\foreach \i in {1,2,3,4}{
  \node[label=below:{\colorbox{white}{\i}}] at ($ (\i,0) - (2,0) $) {};
}
\end{tikzpicture}\\
123, 134\\
(8,2,8,6)
\end{minipage}
\hfill
\begin{minipage}{.4\textwidth}\centering
\begin{tikzpicture}[baseline=2cm]
\draw[step=1, black!20, thin] (-1.5,-.5) grid (2.5, 4.5);
\draw[thick] (-1,1) -- (0,0) -- (1,1) -- (2,4) -- cycle;
\draw[thick] (0,0) -- (2,4);
\foreach \i in {-1,0,1,2}{
  \fill[black,radius=2pt] (\i,\i*\i) circle;
}
\foreach \i in {1,2,3,4}{
  \node[label=below:{\colorbox{white}{\i}}] at ($ (\i,0) - (2,0) $) {};
}\end{tikzpicture}\\
124, 234\\
(6,8,2,8)
\end{minipage}
\hfill\strut
\caption{Lower (left) and upper (right) triangulations of $C(4,2)$ with their GKZ-vectors.
  In the lower triangulation the gaps are 4 and 2, i.e., even; whereas in the upper triangulation the gaps are 3 and 1.  Here $d=2$ is even.}
\label{fig:flip:42}
\end{figure}
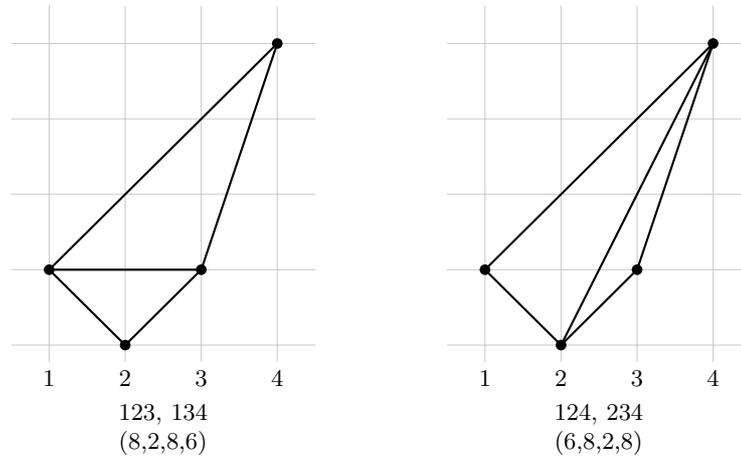

\begin{figure}
\newcommand{\vertexA}{1}
\newcommand{\vertexB}{3}
\newcommand{\vertexC}{2}
\newcommand{\vertexD}{5}
\newcommand{\vertexE}{4}
\strut\hfill
\begin{minipage}{.4\textwidth}\centering
\begin{tikzpicture}[
                    x  = {(1cm,0cm)},
                    y  = {(0cm,1cm)},
                    z  = {(-.4cm,-.15cm)},
                    scale = 1,
                    color = {lightgray}]

  \definecolor{pointcolor_Q1}{rgb}{ 1,0,0 }
  \tikzstyle{pointstyle_Q1} = [fill=pointcolor_Q1]

  \coordinate (v0_Q1) at (-1.75, 3.5, -8.25);
  \coordinate (v1_Q1) at (-0.75, 0.5, -1.25);
  \coordinate (v2_Q1) at (0.25, -0.5, -0.25);
  \coordinate (v3_Q1) at (1.25, 0.5, 0.75);

  \definecolor{edgecolor_Q1}{rgb}{ 0,0,0 }
  \definecolor{facetcolor_Q1}{rgb}{ 0.4667,0.9255,0.6196 }
  \tikzstyle{facestyle_Q1} = [fill=facetcolor_Q1, fill opacity=0.7, draw=edgecolor_Q1, line width=1 pt, line cap=round, line join=round]

  \draw[facestyle_Q1] (v0_Q1) -- (v3_Q1) -- (v2_Q1) -- (v0_Q1) -- cycle;
  \draw[facestyle_Q1] (v2_Q1) -- (v1_Q1) -- (v0_Q1) -- (v2_Q1) -- cycle;
  \draw[facestyle_Q1] (v2_Q1) -- (v3_Q1) -- (v1_Q1) -- (v2_Q1) -- cycle;

  \fill[pointcolor_Q1] (v2_Q1) circle (1 pt);
  \node at (v2_Q1) [text=black, inner sep=0.5pt, below, draw=none, align=left] {\vertexB};

  \draw[facestyle_Q1] (v1_Q1) -- (v3_Q1) -- (v0_Q1) -- (v1_Q1) -- cycle;

  \fill[pointcolor_Q1] (v1_Q1) circle (1 pt);
  \node at (v1_Q1) [text=black, inner sep=0.5pt, left, draw=none, align=left] {\vertexC};
  \fill[pointcolor_Q1] (v3_Q1) circle (1 pt);
  \node at (v3_Q1) [text=black, inner sep=0.5pt, right, draw=none, align=left] {\vertexE};
  \fill[pointcolor_Q1] (v0_Q1) circle (1 pt);
  \node at (v0_Q1) [text=black, inner sep=0.5pt, above, draw=none, align=left] {\vertexA};

  \definecolor{pointcolor_P3}{rgb}{ 1,0,0 }
  \tikzstyle{pointstyle_P3} = [fill=pointcolor_P3]

  \coordinate (v0_P3) at (-1, 1, -1);
  \coordinate (v1_P3) at (-2, 4, -8);
  \coordinate (v2_P3) at (1, 1, 1);
  \coordinate (v3_P3) at (2, 4, 8);

  \definecolor{edgecolor_P3}{rgb}{ 0,0,0 }
  \definecolor{facetcolor_P3}{rgb}{ 0.4667,0.9255,0.6196 }
  \tikzstyle{facestyle_P3} = [fill=facetcolor_P3, fill opacity=0.7, draw=edgecolor_P3, line width=1 pt, line cap=round, line join=round]

  \draw[facestyle_P3] (v0_P3) -- (v3_P3) -- (v1_P3) -- (v0_P3) -- cycle;
  \draw[facestyle_P3] (v2_P3) -- (v0_P3) -- (v1_P3) -- (v2_P3) -- cycle;
  \draw[facestyle_P3] (v2_P3) -- (v3_P3) -- (v0_P3) -- (v2_P3) -- cycle;

  \fill[pointcolor_P3] (v0_P3) circle (1 pt);
  \node at (v0_P3) [text=black, inner sep=0.5pt, below left, draw=none, align=left] {\vertexC};

  \draw[facestyle_P3] (v1_P3) -- (v3_P3) -- (v2_P3) -- (v1_P3) -- cycle;

  \fill[pointcolor_P3] (v1_P3) circle (1 pt);
  \node at (v1_P3) [text=black, inner sep=0.5pt, above, draw=none, align=left] {\vertexA};
  \fill[pointcolor_P3] (v3_P3) circle (1 pt);
  \node at (v3_P3) [text=black, inner sep=0.5pt, left, draw=none, align=left] {\vertexD};
  \fill[pointcolor_P3] (v2_P3) circle (1 pt);
  \node at (v2_P3) [text=black, inner sep=0.5pt, right, draw=none, align=left] {\vertexE};

  \definecolor{pointcolor_Q2}{rgb}{ 1,0,0 }
  \tikzstyle{pointstyle_Q2} = [fill=pointcolor_Q2]

  \coordinate (v0_Q2) at (-1.25, 0.5, -0.75);
  \coordinate (v1_Q2) at (-0.25, -0.5, 0.25);
  \coordinate (v2_Q2) at (0.75, 0.5, 1.25);
  \coordinate (v3_Q2) at (1.75, 3.5, 8.25);

  \definecolor{edgecolor_Q2}{rgb}{ 0,0,0 }

  \definecolor{facetcolor_Q2}{rgb}{ 0.4667,0.9255,0.6196 }

  \tikzstyle{facestyle_Q2} = [fill=facetcolor_Q2, fill opacity=0.7, draw=edgecolor_Q2, line width=1 pt, line cap=round, line join=round]

  \draw[facestyle_Q2] (v2_Q2) -- (v1_Q2) -- (v0_Q2) -- (v2_Q2) -- cycle;
  \draw[facestyle_Q2] (v0_Q2) -- (v3_Q2) -- (v2_Q2) -- (v0_Q2) -- cycle;
  \draw[facestyle_Q2] (v1_Q2) -- (v3_Q2) -- (v0_Q2) -- (v1_Q2) -- cycle;

  \fill[pointcolor_Q2] (v0_Q2) circle (1 pt);
  \node at (v0_Q2) [text=black, inner sep=0.5pt, below left, draw=none, align=left] {\vertexC};

  \draw[facestyle_Q2] (v2_Q2) -- (v3_Q2) -- (v1_Q2) -- (v2_Q2) -- cycle;

  \fill[pointcolor_Q2] (v2_Q2) circle (1 pt);
  \node at (v2_Q2) [text=black, inner sep=0.5pt, right, draw=none, align=left] {\vertexE};
  \fill[pointcolor_Q2] (v3_Q2) circle (1 pt);
  \node at (v3_Q2) [text=black, inner sep=0.5pt, left, draw=none, align=left] {\vertexD};
  \fill[pointcolor_Q2] (v1_Q2) circle (1 pt);
  \node at (v1_Q2) [text=black, inner sep=0.5pt, below, draw=none, align=left] {\vertexB};

   
  \draw[thick,->, black] (-5,-2,-8)node[anchor=south east]{} -- (-4,-2,-8) node[anchor=west] {$x$};
  \draw[thick,->, black] (-5,-2,-8) -- (-5,-1,-8) node[anchor=south] {$y$};
  \draw[thick,->, black] (-5,-2,-8) -- (-5,-2,-7) node[anchor=north east] {$z$};
\end{tikzpicture}\\
  1234, 1245, 2345 \\
  (84,96,24,96,84)
\end{minipage}
\hfill
\begin{minipage}{.4\textwidth}\centering
\begin{tikzpicture}[
                    x  = {(1cm,0cm)},
                    y  = {(0cm,1cm)},
                    z  = {(-.4cm,-.15cm)},
                    scale = 1,
                    color = {lightgray}]

  \definecolor{pointcolor_P2}{rgb}{ 1,0,0 }
  \tikzstyle{pointstyle_P2} = [fill=pointcolor_P2]

  \coordinate (v0_P2) at (-2, 4, -8);
  \coordinate (v1_P2) at (0, 0, 0);
  \coordinate (v2_P2) at (-1, 1, -1);
  \coordinate (v3_P2) at (2, 4, 8);

  \definecolor{edgecolor_P2}{rgb}{ 0,0,0 }
  \definecolor{facetcolor_P2}{rgb}{ 0.4667,0.9255,0.6196 }
  \tikzstyle{facestyle_P2} = [fill=facetcolor_P2, fill opacity=0.7, draw=edgecolor_P2, line width=1 pt, line cap=round, line join=round]

  \draw[facestyle_P2] (v2_P2) -- (v3_P2) -- (v0_P2) -- (v2_P2) -- cycle;
  \draw[facestyle_P2] (v1_P2) -- (v2_P2) -- (v0_P2) -- (v1_P2) -- cycle;
  \draw[facestyle_P2] (v1_P2) -- (v3_P2) -- (v2_P2) -- (v1_P2) -- cycle;

  \fill[pointcolor_P2] (v2_P2) circle (1 pt);
  \node at (v2_P2) [text=black, inner sep=0.5pt, below left, draw=none, align=left] {\vertexC};

  \draw[facestyle_P2] (v0_P2) -- (v3_P2) -- (v1_P2) -- (v0_P2) -- cycle;

  \fill[pointcolor_P2] (v0_P2) circle (1 pt);
  \node at (v0_P2) [text=black, inner sep=0.5pt, above, draw=none, align=left] {\vertexA};
  \fill[pointcolor_P2] (v3_P2) circle (1 pt);
  \node at (v3_P2) [text=black, inner sep=0.5pt, left, draw=none, align=left] {\vertexD};
  \fill[pointcolor_P2] (v1_P2) circle (1 pt);
  \node at (v1_P2) [text=black, inner sep=0.5pt, below, draw=none, align=left] {\vertexB};

  \definecolor{pointcolor_Q1}{rgb}{ 1,0,0 }
  \tikzstyle{pointstyle_Q1} = [fill=pointcolor_Q1]

  \coordinate (v0_Q1) at (-1.5, 4, -8.125);
  \coordinate (v1_Q1) at (0.5, 0, -0.125);
  \coordinate (v2_Q1) at (1.5, 1, 0.875);
  \coordinate (v3_Q1) at (2.5, 4, 7.875);

  \definecolor{edgecolor_Q1}{rgb}{ 0,0,0 }
  \definecolor{facetcolor_Q1}{rgb}{ 0.4667,0.9255,0.6196 }
  \tikzstyle{facestyle_Q1} = [fill=facetcolor_Q1, fill opacity=0.7, draw=edgecolor_Q1, line width=1 pt, line cap=round, line join=round]

  \draw[facestyle_Q1] (v2_Q1) -- (v1_Q1) -- (v0_Q1) -- (v2_Q1) -- cycle;
  \draw[facestyle_Q1] (v1_Q1) -- (v3_Q1) -- (v0_Q1) -- (v1_Q1) -- cycle;
  \draw[facestyle_Q1] (v2_Q1) -- (v3_Q1) -- (v1_Q1) -- (v2_Q1) -- cycle;

  \fill[pointcolor_Q1] (v1_Q1) circle (1 pt);
  \node at (v1_Q1) [text=black, inner sep=0.5pt, below, draw=none, align=left] {\vertexB};

  \draw[facestyle_Q1] (v0_Q1) -- (v3_Q1) -- (v2_Q1) -- (v0_Q1) -- cycle;

  \fill[pointcolor_Q1] (v0_Q1) circle (1 pt);
  \node at (v0_Q1) [text=black, inner sep=0.5pt, above, draw=none, align=left] {\vertexA};
  \fill[pointcolor_Q1] (v3_Q1) circle (1 pt);
  \node at (v3_Q1) [text=black, inner sep=0.5pt, left, draw=none, align=left] {\vertexD};
  \fill[pointcolor_Q1] (v2_Q1) circle (1 pt);
  \node at (v2_Q1) [text=black, inner sep=0.5pt, right, draw=none, align=left] {\vertexE};

   
  \draw[thick,->, black] (-5,-2,-8) node[anchor=south east]{} -- (-4,-2,-8) node[anchor=west] {$x$};
  \draw[thick,->, black] (-5,-2,-8) -- (-5,-1,-8) node[anchor=south] {$y$};
  \draw[thick,->, black] (-5,-2,-8) -- (-5,-2,-7) node[anchor=north east] {$z$};
\end{tikzpicture}
  1235, 1345 \\
  (96,48,96,48,96)
\end{minipage}
\hfill\strut

\caption{Lower (left) and upper (right) triangulations of $C(5,3)$ with their GKZ-vectors.
  In the lower triangulation the gaps are 4 and 2, i.e., even; whereas in the upper triangulation the gaps are 5, 3 and 1.  Here $d=3$ is odd.}
\label{fig:flip:53}
\end{figure}
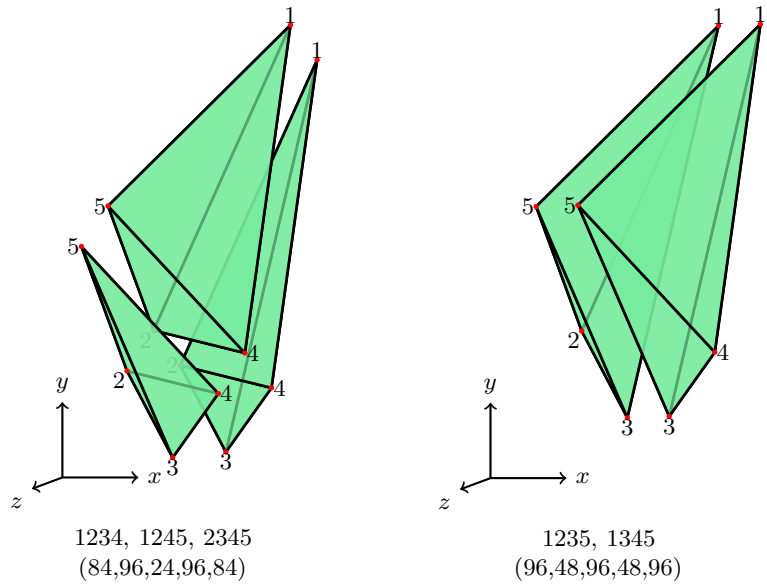

\section{GKZ-vectors}
For a triangulation $\Delta$ of an affine spanning point configuration $P\subset\RR^d$ the \emph{GKZ-vector} is
\[
  \gkz_\Delta \ = \ \bigl( \gkz_\Delta(p) \mid p \in P \bigr) \enspace ,
\]
where $\gkz_\Delta(p)$ is the sum of the normalized volumes of those simplices in $\Delta$ which contain $p$ as a vertex.
The \emph{normalized volume} is the Euclidean volume multiplied by $d!$.

In order to determine the GKZ-vectors of triangulations of cyclic polytopes, we need to choose coordinates.
To keep it simple we can take the lattice points $\mu_d(1),\mu_d(2),\dots,\mu_d(n)$ as the vertices of $\cyclic(n,d)$.
The normalized volume of any $d$-simplex spanned by $\mu_d(i_1),\mu_d(i_2),\dots,\mu_d(i_{d+1})$ with $i_1<i_2<\dots<i_{d+1}$, is the Vandermonde determinant
\begin{equation}\label{eq:vandermonde}
  \det\left(
    \begin{array}{cccc}
      1 & i_1 & \cdots & i_1^d \\
      1 & i_2 & \cdots & i_2^d \\
      \vdots & \vdots & \ddots & \vdots \\
      1 & i_{d+1} & \cdots & i_{d+1}^d
    \end{array}\right)
  \ = \
  \prod_{1\leq k<\ell\leq d+1} (i_\ell-i_k) \enspace .
\end{equation}
In particular, these values do not change when we replace the standard parameters $1,2,\dots,n$ for the moment curve by any other set of $n$ consecutive integers; cf.\ Figure \ref{fig:flip:42}, where we chose the parameters $-1,0,1$ and~$2$, while we keep the labels $1,2,3,4$.
We fix the natural ordering of the vertices on the moment curve in order to identify GKZ-vectors of triangulations of $\cyclic(n,d)$ with vectors in $\RR^n$.
The following basic observation is crucial.

\begin{figure}

\renewcommand{\triangle}[1]{
   \xdef\p{}
   \foreach \n in {#1}{
      \xdef\p{\p (N\n) --}
   }
   \draw \p cycle;
}
\newcommand{\triangulation}[6]{
\begin{array}{c}
\begin{tikzpicture}[scale=.5]
\newcommand{\yscale}{3}
\coordinate (N1) at (-2,4/\yscale);
\coordinate (N2) at (-1,1/\yscale);
\coordinate (N3) at (0,0);
\coordinate (N4) at (1,1/\yscale);
\coordinate (N5) at (2,4/\yscale);
\coordinate (N6) at (3,9/\yscale);
\triangle{#1}
\triangle{#2}
\triangle{#3}
\triangle{#4}
\end{tikzpicture}\\
\foreach \n in {#1}{\n},
\foreach \n in {#2}{\n},
\foreach \n in {#3}{\n},
\foreach \n in {#4}{\n}\\ %
(#6)
\end{array}
}
\begin{tikzpicture}[scale=1.7]
\def\xstretch{1}
\def\ystretch{1.4}
\coordinate (T0coord)  at (\xstretch*0,\ystretch*0);
\coordinate (T1coord)  at (\xstretch*2,\ystretch*1.5);
\coordinate (T2coord)  at (-\xstretch*2,\ystretch*4.5);
\coordinate (T3coord)  at (\xstretch*0,\ystretch*6);
\coordinate (T4coord)  at (-\xstretch*2,\ystretch*0.7);
\coordinate (T5coord)  at (-\xstretch*3,\ystretch*2.25);
\coordinate (T6coord)  at (\xstretch*0,\ystretch*5);
\coordinate (T7coord)  at (-\xstretch*1,\ystretch*2.75);
\coordinate (T8coord)  at (\xstretch*0,\ystretch*4);
\coordinate (T9coord)  at (\xstretch*0,\ystretch*1);
\coordinate (T10coord) at (\xstretch*0,\ystretch*2);
\coordinate (T11coord) at (\xstretch*1,\ystretch*3.25);
\coordinate (T12coord) at (\xstretch*3,\ystretch*3.75);
\coordinate (T13coord) at (\xstretch*2,\ystretch*5.3);

\newcommand{\TrvA}{1}
\newcommand{\TrvB}{2}
\newcommand{\TrvC}{3}
\newcommand{\TrvD}{4}
\newcommand{\TrvE}{5}
\newcommand{\TrvF}{6}

\node[above, font=\tiny] (T0) at (T0coord) {$\triangulation{\TrvA,\TrvB,\TrvC}{\TrvA,\TrvC,\TrvD}{\TrvA,\TrvD,\TrvE}{\TrvA,\TrvE,\TrvF}{T_0}{40,2,8,18,32,20}$};
\node[above, font=\tiny] (T1) at (T1coord) {$\triangulation{\TrvA,\TrvB,\TrvC}{\TrvA,\TrvC,\TrvD}{\TrvD,\TrvE,\TrvF}{\TrvA,\TrvD,\TrvF}{T_1}{38,2,8,38,2,32}$};
\node[above, font=\tiny] (T2) at (T2coord) {$\triangulation{\TrvA,\TrvB,\TrvC}{\TrvD,\TrvE,\TrvF}{\TrvC,\TrvD,\TrvF}{\TrvA,\TrvC,\TrvF}{T_2}{32,2,38,8,2,38}$};
\node[above, font=\tiny] (T3) at (T3coord) {$\triangulation{\TrvD,\TrvE,\TrvF}{\TrvC,\TrvD,\TrvF}{\TrvB,\TrvC,\TrvF}{\TrvA,\TrvB,\TrvF}{T_3}{20,32,18,8,2,40}$};
\node[above, font=\tiny] (T4) at (T4coord) {$\triangulation{\TrvA,\TrvB,\TrvC}{\TrvA,\TrvE,\TrvF}{\TrvC,\TrvD,\TrvE}{\TrvA,\TrvC,\TrvE}{T_4}{38,2,20,2,38,20}$};
\node[above, font=\tiny] (T5) at (T5coord) {$\triangulation{\TrvA,\TrvB,\TrvC}{\TrvC,\TrvD,\TrvE}{\TrvA,\TrvC,\TrvF}{\TrvC,\TrvE,\TrvF}{T_5}{32,2,40,2,8,36}$};
\node[above, font=\tiny] (T6) at (T6coord) {$\triangulation{\TrvC,\TrvD,\TrvE}{\TrvC,\TrvE,\TrvF}{\TrvB,\TrvC,\TrvF}{\TrvA,\TrvB,\TrvF}{T_6}{20,32,20,2,8,38}$};
\node[above, font=\tiny] (T7) at (T7coord) {$\triangulation{\TrvA,\TrvE,\TrvF}{\TrvC,\TrvD,\TrvE}{\TrvA,\TrvB,\TrvE}{\TrvB,\TrvC,\TrvE}{T_7}{32,18,8,2,40,20}$};
\node[above, font=\tiny] (T8) at (T8coord) {$\triangulation{\TrvC,\TrvD,\TrvE}{\TrvA,\TrvB,\TrvF}{\TrvB,\TrvC,\TrvE}{\TrvB,\TrvE,\TrvF}{T_8}{20,38,8,2,20,32}$};
\node[above, font=\tiny] (T9) at (T9coord) {$\triangulation{\TrvA,\TrvD,\TrvE}{\TrvA,\TrvE,\TrvF}{\TrvB,\TrvC,\TrvD}{\TrvA,\TrvB,\TrvD}{T_9}{38,8,2,20,32,20}$};
\node[above, font=\tiny] (T10) at (T10coord) {$\triangulation{\TrvA,\TrvE,\TrvF}{\TrvB,\TrvC,\TrvD}{\TrvB,\TrvD,\TrvE}{\TrvA,\TrvB,\TrvE}{T_{10}}{32,20,2,8,38,20}$};
\node[above, font=\tiny] (T11) at (T11coord) {$\triangulation{\TrvB,\TrvC,\TrvD}{\TrvA,\TrvB,\TrvF}{\TrvB,\TrvD,\TrvE}{\TrvB,\TrvE,\TrvF}{T_{11}}{20,40,2,8,18,32}$};
\node[above, font=\tiny] (T12) at (T12coord) {$\triangulation{\TrvB,\TrvC,\TrvD}{\TrvA,\TrvB,\TrvD}{\TrvD,\TrvE,\TrvF}{\TrvA,\TrvD,\TrvF}{T_{12}}{36,8,2,40,2,32}$};
\node[above, font=\tiny] (T13) at (T13coord) {$\triangulation{\TrvB,\TrvC,\TrvD}{\TrvD,\TrvE,\TrvF}{\TrvA,\TrvB,\TrvF}{\TrvB,\TrvD,\TrvF}{T_{13}}{20,38,2,20,2,38}$};
\tikzstyle tree=[-stealth, thick]
\tikzstyle nontree=[thick, dashed, black!40]
\path[tree] (T0) edge (T1);
\path[tree] (T0) edge (T4);
\path[tree] (T0) edge (T9);
\path[tree] (T1) edge (T2);
\path[tree] (T2) edge (T3);
\path[tree] (T4) edge (T5);
\path[tree] (T4) edge (T7);
\path[tree] (T5) edge (T6);
\path[tree] (T7) edge (T8);
\path[tree] (T9) edge (T10);
\path[tree] (T9) edge (T12);
\path[tree] (T10) edge (T11);
\path[tree] (T12) edge (T13);
\path[nontree] (T6) edge (T3);
\path[nontree] (T13) edge (T3);
\path[nontree] (T2) edge (T5);
\path[nontree] (T6) edge (T8);
\path[nontree] (T8) edge (T11);
\path[nontree] (T11) edge (T13);
\path[nontree] (T7) edge (T10);
\path[nontree] (T1) edge (T12);
\end{tikzpicture}

\caption{First higher Stasheff-Tamari order $HST_1(6,2)$ with reverse search tree marked.
The lowest triangulation has the lexicographically largest GKZ-vector.}
\label{fig:HST1-62}
\end{figure}

\begin{proposition}\label{prop:upflip}
  Let $\Delta$ and $\Delta'$ be two triangulations of $\cyclic(n,d)$ related by a flip $[\Delta\rightsquigarrow\Delta']$.
  Then we have
  \[
    \Delta\leq_1\Delta'\ \iff \
    \left\{
      \begin{array}{ll}
        \gkz_\Delta>_{lex}\gkz_{\Delta'}  & \mbox{ if $d$ even,} \\
        \gkz_\Delta<_{lex}\gkz_{\Delta'}  & \mbox{ if $d$ odd.}
      \end{array}
    \right.
  \]
\end{proposition}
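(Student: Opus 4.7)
The plan is to reduce the whole comparison to a single coordinate. Since $\Delta$ and $\Delta'$ agree outside the circuit $C = \{i_1 < i_2 < \cdots < i_{d+2}\}$ of the flip, the GKZ-vectors $\gkz_\Delta$ and $\gkz_{\Delta'}$ agree in every coordinate not indexed by $C$, so the lex comparison is decided by the entry at position $i_1$. The task is thus to compute the sign of $\gkz_\Delta(i_1) - \gkz_{\Delta'}(i_1)$.

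Working inside the circuit, write $V_j := \prod_{k<\ell,\,k,\ell\neq j}(i_\ell - i_k)$ for the (positive) normalized volume of $C \setminus \{i_j\}$. The unique affine dependence on the $d+2$ points of $C$ reads
\[
  \sum_{j=1}^{d+2} (-1)^{j+1} V_j\, \mu_d(i_j) \ = \ 0, \qquad \sum_{j=1}^{d+2} (-1)^{j+1} V_j \ = \ 0,
\]
both immediate from (\ref{eq:vandermonde}). Hence the Radon partition of $C$ splits into odd- and even-indexed points, and the two triangulations of $\cyclic(d+2,d)$ are precisely $T_{\mathrm{odd}} = \{C \setminus \{i_j\} : j \text{ odd}\}$ and $T_{\mathrm{even}} = \{C \setminus \{i_j\} : j \text{ even}\}$. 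Direct bookkeeping, using that the sums of $V_j$ over odd and over even indices coincide, yields $\gkz_{T_{\mathrm{odd}}}(i_1) - \gkz_{T_{\mathrm{even}}}(i_1) = -V_1 < 0$. It remains to decide which of $T_{\mathrm{odd}}, T_{\mathrm{even}}$ plays the role of the \emph{lower} triangulation.

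This is the only place where the parity of $d$ enters. The plan is to lift to $\mu_{d+1}$: the facet of the simplex $\cyclic(d+2,d+1)$ obtained by deleting $\mu_{d+1}(i_j)$ is a lower facet iff $\mu_{d+1}(i_j)$ lies above the affine interpolation, at $\mu_d(i_j)$, of the heights $i_k^{d+1}$ for $k \neq j$. Using the affine dependence above, that signed vertical gap evaluates to $W/\lambda_j$, where $\lambda_j = (-1)^{j+1} V_j$ and $W := \sum_j (-1)^{j+1} V_j\, i_j^{d+1}$; expanding the $(d+2) \times (d+2)$ Vandermonde determinant with rows $(1,i_j,i_j^2,\ldots,i_j^{d+1})$ along its last column identifies $W = (-1)^{d+1} \prod_{k<\ell}(i_\ell - i_k)$. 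Hence the facet missing $i_j$ is lower exactly when $j$ and $d$ have the same parity, so the lower triangulation is $T_{\mathrm{even}}$ for $d$ even and $T_{\mathrm{odd}}$ for $d$ odd. Combining with the sign from the previous paragraph gives the claimed lex inequality in both cases. The main obstacle is precisely this sign-tracking step: everything else is either a definition or routine computation, but correctly pinning the factor $(-1)^{d+1}$ on the Vandermonde expansion, and reading off from it which triangulation gets the label ``lower'', is what actually produces the case distinction in the statement.
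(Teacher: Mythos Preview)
Your proof is correct and follows the same overall plan as the paper: reduce to the circuit $C\cong\cyclic(d+2,d)$, note that the two GKZ-vectors differ only on the coordinates indexed by $C$, and decide the lex comparison at the smallest index $i_1$ by identifying which of the two triangulations of the circuit is the lower one in the sense of the lift to $\cyclic(d+2,d+1)$.

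The one substantive difference is in how the lower/upper identification is made. The paper invokes the Oriented Gale's Evenness Criterion \cite[Corollary~6.1.9]{Triangulations}: the unique facet of $\cyclic(d+2,d+1)$ with gap $1$ is $\{2,\dots,d+2\}$, and the parity of that gap is the parity of $d+1$, so for $d$ even every even (i.e., lower) facet contains vertex~$1$, giving $\gkz_\Delta(1)$ equal to the full volume. You instead derive the same parity from scratch, expanding the $(d{+}2)\times(d{+}2)$ Vandermonde along its last column to get $W=(-1)^{d+1}\prod_{k<\ell}(i_\ell-i_k)$ and reading off which $\lambda_j$ have the same sign as $W$. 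Your route is more self-contained (no external combinatorial criterion needed) and even yields the exact value $V_1$ for the GKZ gap at $i_1$; the paper's route is shorter once the criterion is on the table and makes the combinatorics of the case distinction more transparent.
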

\begin{proof}
  Since each circuit looks like $C(d+2,d)$ it suffices to consider the case
  $n=d+2$. We exploit the relationship of the triangulations of $C(d+2,d)$ with
  the upper and lower hull of $C(d+2,d+1)$ previously explained.

  The Oriented Gale's Evenness Criterion from \cite[Corollary 6.1.9]{Triangulations} describes the upper and lower facets of $C(d+2,d+1)$.
  Let $F\subseteq C(d+2,d+1)$ be a facet, then $F$ can be written as a subset of $[d+2]$,
  the set of indices of vertices in $F$.  The \emph{gaps} of $F$ are the
  elements of $[n]\setminus F$. A gap $i$ of $F$ is \emph{even} if the number
  of elements in $F$ that are larger than $i$ is even. It is called \emph{odd}
  otherwise. Correspondingly, a facet is called odd/even if all its gaps are
  odd/even. The odd facets correspond to the upper triangulation of $C(d+2,d)$
  and the even facets give rise to the lower triangulation of $C(d+2,d)$.
   
  Assume that $1$ is a gap of $F$. Since every facet of $C(d+2,d+1)$ is a
  simplex, $F$ must be $\{2,3,\dots,d+2\}$ and $1$ is the only gap of $F$.
  Hence, if $d$ is odd, then $F$ is even.
  Conversely, if $d$ is even, then $F$ must be odd.
  We conclude that, if $d$ is odd, then all odd facets contain $1$.
  However, if $d$ is even, then only the even facets contain $1$.
  
  Assume now that $d$ is even.
  The odd case is similar.

  Let $\Delta$ and $\Delta'$ be the lower and upper triangulations of $C(d+2,d)$, i.e. $\Delta\leq_1\Delta'$.
  Then $\Delta$ contains all the even facets of $C(d+2,d+1)$.
  But any even facet contains $1$, thus the first entry of $\gkz_{\Delta}$ is the entire normalized volume of $\cyclic(d+2,d)$.
  The facet $\{2,3,\ldots,d+2\}$ is odd, and hence it belongs to $\Delta'$.
  Since it does not contain $1$, we infer that $\gkz_\Delta(1) > \gkz_{\Delta'}(1)$.
  Hence we obtain $\gkz_\Delta>_{lex}\gkz_{\Delta'}$.
  
  This argument can be reversed, and this completes the proof. \qed
\end{proof}

Figure~\ref{fig:flip:42} and Figure~\ref{fig:flip:53}  depicts the situation considered in the proof above for $(n,d)=(4,2)$ and $(n,d)=(5,3)$, respectively.
The interest in Proposition~\ref{prop:upflip} comes from the following.

In \cite{Imai:2002} Imai et al. described an algorithm for computing all (regular) triangulations of a given point configurations, which is based on the reverse search enumeration scheme of Avis and Fukuda \cite{AF93}.
That algorithm, which we call \emph{down-flip reverse search}, was improved and implemented by Skip Jordan with the authors of this extended abstract \cite{mptopcom}.
The basic idea is to orient each flip according to lexicographic ordering of the GKZ-vectors.
Then down-flip reverse search produces a directed spanning tree of those triangulations which can be obtained from some seed triangulation by monotone flipping; cf.\ \cite[\S5.3.2]{Triangulations}.
For the cyclic polytopes we arrive at two choices for orienting the flips, one by GKZ-vectors, one according to the first higher Stasheff--Tamari order.
Now Proposition~\ref{prop:upflip} says that these two choices fortunately agree.

\begin{corollary}
  Down-flip reverse search computes a directed spanning tree of the first Stasheff-Tamari poset $\HST_1(n,d)$, rooted at the triangulation with the lexicographically largest GKZ-vector.
  For $d$ even, the root is the lowest triangulation of $\cyclic(n,d)$, whereas, for $d$ odd, the root is the highest triangulation.
  In particular, each triangulation of a cyclic polytope can be obtained by monotone flipping from the respective roots.
\end{corollary}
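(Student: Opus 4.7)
The plan is to combine Proposition~\ref{prop:upflip} with the standard analysis of reverse search due to Avis and Fukuda. Reverse search produces a directed spanning tree of a finite graph once the edges are oriented acyclically and there is a unique source; the tree edges are those along which a canonical ``local reverse'' operation is the inverse of ``local search.'' In our setting the vertex set is the collection of all triangulations of $\cyclic(n,d)$, the edges are flips, and each flip is oriented toward the lexicographically smaller GKZ-vector. Acyclicity is automatic because lexicographic order is a total order on $\RR^n$, and connectedness of the flip graph of a cyclic polytope is classical \cite[\S6.1]{Triangulations}.

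First I would translate the GKZ-orientation into the language of the first higher Stasheff--Tamari order by invoking Proposition~\ref{prop:upflip}. For $d$ even, a flip decreases $\gkz$ lexicographically if and only if it is an up-flip; for $d$ odd the two orientations are opposite. Thus the directed flip graph used by down-flip reverse search coincides with the Hasse diagram of $\HST_1(n,d)$ when $d$ is even and with that of the opposite poset when $d$ is odd. In either case every edge is directed away from the reverse search root, which proves the claim that the resulting spanning tree is a spanning tree of $\HST_1(n,d)$.

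Next I would identify the root. By construction the reverse search root is the unique source of the orientation, i.e.\ the triangulation with lexicographically maximal GKZ-vector. Using Proposition~\ref{prop:upflip} once more: for $d$ even, no down-flip (in the GKZ sense) can be applied at the root, so no up-flip in $\HST_1$ reaches it from below and no down-flip of $\HST_1$ leaves it; the unique such element is the lowest triangulation $\hat 0$ of $\HST_1(n,d)$. For $d$ odd, the parity in Proposition~\ref{prop:upflip} is reversed, so the root is the highest triangulation $\hat 1$. Existence and uniqueness of these extremal elements follow from the identification of the lowest and highest triangulations with projections of the lower and upper hulls of $\cyclic(n,d+1)$.

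The main obstacle is justifying that the reverse search output is genuinely a spanning tree of the full flip graph, rather than of some proper connected subgraph. This requires two facts about cyclic polytopes: the flip graph is connected, and from any non-extremal triangulation at least one flip decreases the GKZ-vector lexicographically (equivalently, at least one down-flip in $\HST_1$ exists, up to parity). Both are standard consequences of the structure of triangulations of $\cyclic(n,d)$ together with Proposition~\ref{prop:upflip}. Once these are in place, the concluding assertion that every triangulation is reachable from the appropriate root by a monotone sequence of flips is simply the statement that the spanning tree is rooted at the source.
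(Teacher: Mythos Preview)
The paper does not give a separate proof of this corollary; it is presented as an immediate consequence of Proposition~\ref{prop:upflip} together with the general behaviour of down-flip reverse search (citing \cite{Imai:2002}, \cite{AF93}, \cite[\S5.3.2]{Triangulations}) and the known structure of $\HST_1(n,d)$ from \cite{Rambau:1997}. Your outline follows exactly this route, so in spirit you are doing what the paper does, only with more detail.

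There is, however, a persistent direction slip in your write-up that you should repair. You correctly note that for $d$ even a GKZ-decreasing flip is an $\HST_1$-up-flip. But then the sentence ``no down-flip (in the GKZ sense) can be applied at the root'' is false: at the lexicographic GKZ-maximum \emph{every} flip is GKZ-decreasing. What is true, and what you actually need, is that no GKZ-\emph{increasing} flip exists there; for $d$ even this says the root admits no $\HST_1$-down-flip, hence is $\HST_1$-minimal, hence equals the lowest triangulation. The same reversal occurs in your final paragraph: for reverse search rooted at the GKZ-maximum you need that every non-root triangulation admits a GKZ-\emph{increasing} flip (so that local search toward the root is always defined). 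Via Proposition~\ref{prop:upflip} for $d$ even this is precisely Rambau's result that every triangulation other than the lowest one admits an $\HST_1$-down-flip. Your claimed equivalence ``decreases the GKZ-vector $\Leftrightarrow$ down-flip in $\HST_1$'' is therefore backwards for even $d$. Once these arrows are straightened out, the argument is correct and matches the paper's.
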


The first higher Stasheff--Tamari order $\HST_1(6,2)$ with GKZ-vectors is shown in Figure~\ref{fig:HST1-62}.

\newcommand{\verified}[1]{\textcolor{black}{#1}}
\newcommand{\fixed}[1]{\textcolor{red}{#1}}
\newcommand{\new}[1]{\textcolor{blue}{#1}}
\newcommand{\toobig}[1]{\textcolor{black}{#1}}
\newcommand{\planned}{}

\begin{table}[th]\centering
  \caption{The number of triangulations of $\cyclic(c+d,d)$.
    The column $d=2$ contains the Catalan numbers, while the row $c=4$ is known by results of Azaola and Santos \cite{AzaolaSantos:2002}.
    The rows $c\in\{1,2,3\}$ are trivial and only listed for completeness.
    The row $c=5$ and the column $d=2$ are marked for their relevance to Question~\ref{qst:beta}.
    Our new results are written in blue; the rest of the table agrees with \cite[Table~1]{RambauReiner:Stasheff-Tamari}.}
\newcommand{\quotSec}[1]{\cellcolor{black!10}{#1}}
\newcommand{\cellQuot}[1]{\cellcolor{black!10}{#1}}
\label{tab:summary}
\begin{tabular}{crrrrrrrrrrrrrrr}
\toprule
c $\backslash$ d: & \cellQuot{2} & 3 & 4 & 5 & 6 &  7  & 8\\
\midrule
     1 & \cellQuot{1} & 1 & 1 & 1 & 1 &  1 & 1 \\
     2 & \cellQuot{2} & 2 & 2 & 2 & 2 &  2 & 2 \\
     3 & \quotSec{5} & 6 & 7 & 8 & 9 & 10 & 11 \\
     4 & \quotSec{14} & 25 & 40 & 67 & 102 & 165 & 244 \\
\cellQuot{5} & \quotSec{42} & \quotSec{138} & \quotSec{357} & \quotSec{1\,233} & \quotSec{3\,278} & \quotSec{12\,589} & \quotSec{\verified{35\,789}}\\
     6 & \quotSec{132} & 972 & 4\,824 & \verified{51\,676} & \verified{340\,560} & \verified{6\,429\,428} &\new{68\,007\,706}\\
7 & \quotSec{429} & 8\,477 & 96\,426 & \verified{5\,049\,932} & \verified{132\,943\,239} & \planned\\
8 & \quotSec{1\,430} & 89\,405 & \verified{2\,800\,212} & \new{1\,171\,488\,063}\\
9 & \quotSec{4\,862} & \verified{1\,119\,280} & \verified{116\,447\,760} \\
10 & \quotSec{16\,796} & \verified{16\,384\,508} & \planned\\
11 & \quotSec{\verified{58\,786}} & \verified{276\,961\,252} \\
12 & \quotSec{208\,012} & \new{5\,349\,351\,298}\\
\toprule
c $\backslash$ d: &  9 & 10 & 11 & 12 & 13 & 14\\
\midrule
1 &   1 &  1 &  1 &  1 &  1 & 1 \\
2 &   2 &  2 &  2 &  2 &  2 & 2 \\
3 &  12 & 13 & 14 & 15 & 16 & 17 \\
4 &  387 & 562 & 881 & 1\,264 & 1\,967 & 2\,798\\
\cellQuot{5} &  \quotSec{\verified{159\,613}} & \quotSec{\verified{499\,900}} & \quotSec{\verified{2\,677\,865}} & \quotSec{\verified{9\,421\,400}} & \quotSec{\verified{62\,226\,044}} & \quotSec{\new{247\,567\,074}}\\
\bottomrule
\end{tabular}
\end{table}

\section{Computations with \mptopcom}
The open source software \mptopcom is designed for computing triangulations in a massively parallel setup.
Its algorithm is the down-flip reverse search method of Imai et al. \cite{Imai:2002} with several improvements as described in \cite{mptopcom}.
As its key feature reverse search is output sensitive, and this makes it attractive for extremely large enumeration problems.
Our parallelization, based on the \MPI protocol, employs \emph{budgeting} for load balancing; cf.\ \cite{mts_tutorial,AD17}.
In this way \mptopcom can enumerate the (regular) triangulations of much larger point sets than other software before; extensive experiments are described in \cite[\S7]{mptopcom}.
\mptopcom uses linear algebra and basic data types from \polymake \cite{polymake}, triangulations and flips from \topcom \cite{topcom} and the budgeted parallel reverse search from \mts \cite{mts_tutorial}.



The most recent census of triangulations of cyclic polytopes that we are aware of is by Rambau and Reiner \cite[Table~1]{RambauReiner:Stasheff-Tamari}; we use their notation and introduce the parameter $c:=n-d$.
Note that there are two rather obvious typos in the rows $c\in\{10,11\}$ of the column $d=1$ in \cite[Table~1]{RambauReiner:Stasheff-Tamari}.
Apart from that we can confirm their results; cf.\ Table~\ref{tab:summary}.
Our new results are the values for $(c,d)\in\{(12,3),(8,5),(6,8),(5,14)\}$.

Our experiments used \mptopcom, version 1.0, on a cluster with four nodes, each of which comes with 2 x 8-Core Xeon E5-2630v3 (2.4 GHz) and 64GB per node. We ran \mptopcom with 40 threads. The operating system is SMP Linux 4.4.121.
For instance, the computation for $c=5$ and $d=14$, i.e., $n=19$ took 71191 seconds, i.e., less than 20 hours.

Azaola and Santos \cite[p.~30]{AzaolaSantos:2002} implicitly raised the following question.
\begin{question}\label{qst:beta}
  Is there an absolute constant $\beta>1$ such that, for all $n\geq 7$:
  \begin{equation}\label{eq:beta}
    \frac{1}{\beta} \ \leq \ \frac{\#\{\mbox{triangulations of }C(n,n-5)\}}{\#\{\mbox{triangulations of }C(n,2)\}} \leq \beta \enspace ?
  \end{equation}
\end{question}
This relates the row $c=5$ with the column $d=2$; these are marked in Table~\ref{tab:summary}.
From \mptopcom's results we can derive the series (\ref{eq:beta}) for $n\in\{7,8,\dots,19\}$:
\[
\begin{array}{c}
1,\
1.045,\
0.832,\
0.862,\
0.674,\
0.750,\
0.609,\\
0.767,\
0.673,\
1.001,\
0.972,\
1.760,\
1.910.
\end{array}
\]
Note that the sequence in \cite[p.~30]{AzaolaSantos:2002} lists the reciprocals of the above; moreover, that sequence contains two more (trivial) values for $n\in\{5,6\}$, which we omit.


\bibliographystyle{amsplain}
\bibliography{stasheff-tamari}

\end{document}